\PassOptionsToPackage{square,comma,numbers,sort&compress}{natbib}
\documentclass{article}

\usepackage[final]{neurips_2021}
\usepackage{bbding}
\usepackage[utf8]{inputenc} 
\usepackage[T1]{fontenc}    
\usepackage{url}            
\usepackage{booktabs}       
\usepackage{amsfonts, amsmath, amssymb, amsthm}       
\usepackage{nicefrac}       
\usepackage{microtype}      
\usepackage{wrapfig}
\usepackage{algorithm}
\usepackage{algorithmic}
\usepackage{graphicx}
\usepackage{color}
\usepackage{lineno}
\usepackage{multirow}
\usepackage{bm}
\usepackage{diagbox}
\usepackage{makecell}
\usepackage{subfigure}

\usepackage{comment}

\newtheorem{theorem}{Theorem}
\newtheorem{lemma}{Lemma}

\newtheorem{definition}{Definition}
\newtheorem{note}{Note}

\title{Feasible Pairings for Decentralized Integral Controllability of Non-Square Systems}

\author{Yuhao Tong$^1$, Steven W. Su $^{1,2}$ $^*$
	\AND
	\thanks{ The corresponding author.}
	\thanks{$^{1}$ College of Artificial Intelligence and Big Data for Medical Sciences, Shandong First Medical University (Shandong Academy of Medical Sciences), P. R. China.}
	\thanks{$^{2}$ University of Technology Sydney, Australia.}
}

\begin{document}
	
	\maketitle

\begin{abstract}
	This paper investigates the determination of feasible input-output pairings for the decentralized integral controllability of non-square systems. The relevance of this problem extends beyond traditional industrial processes into modern AI research, particularly Multi-Agent Reinforcement Learning (MARL), where environments frequently act as strongly non-square mappings that evaluate high-dimensional joint action spaces via comparatively low-dimensional global rewards. To address the stability of these complex distributed architectures, we extend the concept of D-stability to non-square matrices, providing a crucial mathematical foundation. We formally define D-stability for non-square matrices as a direct generalization of the square case. By introducing the concept of ``Squared Matrices'', which are derived from specific column selections of the non-square formulation and directly correspond to candidate control pairings, we establish a fundamental link between the stability of these square sub-components and the original non-square system. Ultimately, we propose sufficient conditions under which the individual Volterra-Lyapunov stability of these squared components guarantees the extended D-stability of the non-square matrix, thereby providing a rigorous method to identify feasible pairings that ensure robust decentralized control across both classical and data-driven applications.
\end{abstract}

\section{Introduction}

In various scientific and engineering applications, the stability of a system under structural variations and constraints is a primary concern. For instance, in the linearization of diffusion models for biological systems at constant equilibrium, the concept of strongly stable matrices has been proposed \cite{CROSS1978253, hadeler2006nonlinear}. Similarly, in systems control, decentralized stability has motivated extensive research into the properties of various matrix classes, including P-matrices, D-stable matrices, and Volterra-Lyapunov stable matrices \cite{sun2023gallery, Wang:-On, johnson1974sufficient, goh1977global, iggidr2023limits, barker1978positive}. These mathematical structures also find significant applications in economics.

In the process control industry, decentralized or distributed control architectures are often preferred over centralized schemes due to their simplicity, enhanced flexibility, and fault tolerance \cite{anderson1981algebraic, Anderson:-Alge, Sko:-ariab, grosdidier1986interaction, Su:-Analy,Su2015_DecentralizedIntegralControllability}. While the controllability of linear square systems under decentralized structures incorporating integral action is well-explored, challenges arise when dealing with non-square processes. Such non-square configurations---characterized by an unequal distribution of inputs and outputs---are prevalent in various control and optimization scenarios. For example, aerospace flight control systems often feature more actuators than primary outputs, robotic systems frequently possess redundant degrees of freedom, and certain chemical distillation columns rely on an unbalanced number of inputs and outputs to remain economically efficient.

Traditionally, controller design for these non-square processes involves transforming the system into a square configuration by either removing surplus inputs or adding new outputs. However, this forced squaring method can severely jeopardize the feedback system's flexibility and reliability. Specifically, the removal of surplus inputs diminishes the adaptability of the controller, while the addition of new outputs necessitates supplementary control efforts. 

To address these limitations, there is strong motivation to apply decentralized integral controllability directly to non-square processes (DIC-NSQ) without requiring structural transformations. Extending D-stability to non-square matrices provides the essential mathematical foundation to guarantee the stability of these distributed systems. Most importantly, this extended framework allows us to systematically evaluate and select {feasible input-output pairings} without discarding redundant actuators, ensuring independent tuning and robust performance even in the face of partial or complete actuator failures.

Recently, Multi-Agent Reinforcement Learning (MARL) has also emerged as a data-driven framework for distributed decision-making and cooperative control \cite{Busoniu2008_ComprehensiveSurveyMARL, Gronauer2022_DecentralizedMARLSurvey, Canese2021_MARLreview}. In many practical MARL formulations, the environment acts as a strongly non-square mapping: a high-dimensional joint action space (one control input per agent, often combinatorially large) is evaluated via a comparatively low-dimensional global state or team reward. From a control perspective, this parallels distributed control design for non-square systems where control channels are more than that of measured outputs. Recent works have explicitly utilized MARL to construct scalable distributed controllers for large-scale systems, such as networks and UAV swarms \cite{Zhang2020_LyapunovMARL, Gandhi2023_MARL_PowerSystem, Yang2023_GraphMeanFieldMARL}.

Given the prevalence of non-square systems in these diverse domains, addressing the stability of their associated real matrices is essential \cite{zhang2017multiloop, WHEATON20171, zhiteckii2022robust, sujatha2022control, steentjes2022data}. This necessitates the extension of stability concepts from square to non-square matrices to formalize how redundant inputs can be safely paired with primary outputs. While this paper focuses on real matrices, extensions to complex matrices are achievable via frequency-domain techniques. 

The primary contribution of this study is the identification of conditions that guarantee extended D-stability for non-square matrices, which in turn provides a rigorous theoretical basis for establishing $feasible$ $decentralized$ $pairings$. We derive a mild sufficient condition based on the Volterra-Lyapunov stability of the system's square sub-components. A natural open problem arising from this work is to characterise the necessary and sufficient conditions for D-stability of non-square matrices.

\section{Decentralized Stabilization for Non-Square Systems}


The motivation for extending D-stability to non-square matrices arises partly from Decentralized Unconditional Stability (DUS) analysis \cite{Sko:-ariab, grosdidier1986interaction, Su:-Analy, Su2025_DistributedIntegralControllability}. A key DUS condition for square systems mirrors the D-stability requirement of the steady-state gain matrix. Here, we generalize this to non-square systems.

\begin{figure}[htpb]
	\centering
	\includegraphics[width=0.9\linewidth]{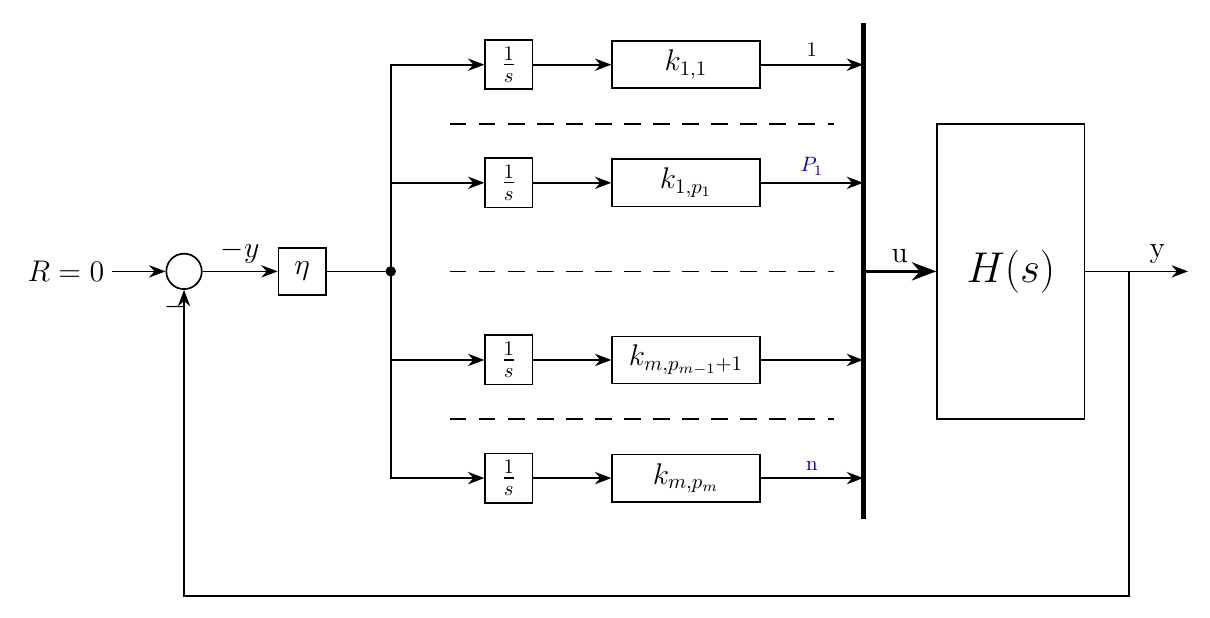}
	\caption{Distributed Integral Controllability for a non-square system. The process maps a high-dimensional input space to a lower-dimensional output space, accommodating redundant control channels.}
	\label{fig:dic_nsq}
\end{figure}

To facilitate the analysis of Decentralized Integral Controllability for Non-Square (DIC-NSQ) systems (see Figure \ref{fig:dic_nsq}), we utilize a state-space realization of the combined system and the decentralized integral controller. Assume $(A, B, C, D)$ is a minimal realization of the process $H(s)$. That is:
\begin{equation} \label{eq:state_space_H}
	H(s) : \begin{cases}
		\dot{z} = A z + B u \\
		y = C z + D u
	\end{cases}
\end{equation}
where $z$ represents the state vector of the plant.

Furthermore, we define a state-space realization for the integral controller as follows:
\begin{equation} \label{eq:state_space_K}
	\begin{cases}
		\dot{x} = -\eta y \\
		u = \bar{K} x
	\end{cases}
\end{equation}
where $x$ represents the integral states, $\eta$ is a positive scalar tuning parameter governing the integration rate, and $\bar{K} \in \mathbb{R}^{n \times m}$ is the integral coefficient matrix for the non-square system. 

The structure of $\bar{K}$ is defined to map the $m$ outputs to the $n$ inputs via a block-structured coefficient matrix:
\begin{equation} \label{eq:K_matrix_expanded}
	\bar{K} =
	\begin{bmatrix}
		k_{1,1} & \cdots & k_{1,p_1} & 0 & \cdots & 0 & \cdots & 0 & \cdots & 0 \\
		0 & \cdots & 0 & k_{2,1} & \cdots & k_{2,p_2} & \cdots & 0 & \cdots & 0 \\
		\vdots & \ddots & \vdots & \vdots & \ddots & \vdots & \ddots & \vdots & \ddots & \vdots \\
		0 & \cdots & 0 & 0 & \cdots & 0 & \cdots & k_{m,1} & \cdots & k_{m,p_m}
	\end{bmatrix}^T
\end{equation}
where the total number of inputs is given by $\sum_{i=1}^{m} p_i = n$. 

By treating the integral states $x_i$ as the primary system states, we can form an unforced closed-loop state equation. This specific formulation allows the closed-loop system dynamics to be transformed into a standard singular perturbation form, which isolates the slow controller dynamics from the fast plant dynamics.

Building upon the state-space realization of the combined system and controller, the unforced closed-loop state equations can be depicted as follows:
\begin{equation} \label{eq:unforced_closed_loop}
	\begin{cases}
		\frac{1}{\eta}\dot{x} = -D\bar{K}x - Cz \\
		\dot{z} = B\bar{K}x + Az
	\end{cases}
\end{equation}

To analyse the stability of this system, we transform Equation \eqref{eq:unforced_closed_loop} into a standard singular perturbation form. By introducing a slow time scale $\tau = \eta(t - t_0)$ such that $\tau = 0$ at $t = t_0$, the system dynamics become:
\begin{equation} \label{eq:singular_perturbation}
	\begin{cases}
		\frac{dx}{d\tau} = -D\bar{K}x - Cz \\
		\eta\frac{dz}{d\tau} = B\bar{K}x + Az
	\end{cases}
\end{equation}

Following standard singular perturbation analysis, the fast system dynamics (represented by the state $z$) will converge rapidly. Specifically, as the integration rate $\eta$ becomes sufficiently small ($\eta \to 0$), the fast dynamics reach a quasi-steady state where $B\bar{K}x + Az \approx 0$. Since the original plant matrix $A$ is asymptotically stable, the inverse $A^{-1}$ exists, allowing us to express the fast state as $z \approx -A^{-1}B\bar{K}x$.

Substituting this quasi-steady state back into the slow dynamics yields the reduced-order model:
\begin{equation} \label{eq:reduced_order}
	\begin{cases}
		\dot{x} \approx -(D - CA^{-1}B)\bar{K}x = -H(0)\bar{K}x \\
		z \approx -A^{-1}B\bar{K}x
	\end{cases}
\end{equation}

Consequently, according to singular perturbation theory, there exists a small positive scale $\epsilon$ such that whenever $0 < \eta \le \epsilon$, the overall closed-loop system is asymptotically stable if the eigenvalues of $-H(0)\bar{K}$ lie in the open left-half plane.

Then, according to the singular perturbation theory, there exist a small positive scale $\epsilon$, such that whenever $0 < \eta \le \epsilon$, the overall closed loop system is asymptotically stable if %
\begin{equation}
	\operatorname{Re}\{\sigma_i(-H(0)\bar{K})\} < 0 \quad \text{for } i \in \{1, 2, \dots, m\}
\end{equation} %

The preceding singular perturbation analysis establishes a fundamental link between the dynamic closed-loop stability of the system $H(s)$ and the algebraic properties of its steady-state gain matrix, $H(0)$. %
Specifically, the condition for the system to achieve Decentralized Unconditional Stability (DUS) is governed entirely by the eigenvalues of the matrix product $-H(0)\bar{K}$. %
Consequently, we can abstract the system-level Decentralized Integral Controllability (DIC) into a purely matrix-theoretic framework. %
By assuming a generic non-square real matrix $A$ is equivalent to the steady-state gain matrix $H(0)$, the problem of verifying DIC for the dynamical system simplifies directly to evaluating the extended D-stability of the constant matrix $A$. %
To rigorously analyse this connection and establish the necessary mathematical bounds, we now formalize the definition of D-stability for a non-square matrix. %

\begin{definition} \label{Dfn1}
Let $A \in \mathbb{R}^{m \times n}$ be a non-square matrix with $n > m$. We define $A$ as \textbf{D-stable} if there exists a block diagonal matrix $K \in \mathbb{R}^{n \times m}$ such that for all non-negative diagonal matrices $E \in \mathbb{R}^{n \times n}$ and all $j \in \mathcal{M}$:
\begin{equation}\label{suf_con}
	\operatorname{Re}\{\sigma_i ( [A E K]_j ) \} > 0,
\end{equation}
where $\mathcal{M}$ is the index set of all $k$-tuples of integers in the range $1,\dots,m$, $[M]_j$ denotes the $j$-th principal minor (or relevant sub-block structure defined by the context of decentralized control \cite{campo1994achievable}), and $\operatorname{Re}\{ \sigma_i (\cdot) \}$ denotes the real part of the eigenvalues.
\end{definition}

\begin{note} \label{note1}
Following \cite{campo1994achievable}, if diagonal entries of $E$ are zero, the corresponding columns of $A$ (and rows of $K$) are considered inactive and removed from the product $AEK$. If all inputs associated with a specific column block of $K$ are zero, that entire block is effectively deleted from the analysis.
\end{note}

Definition \ref{Dfn1} generalizes classical D-stability; in the square case ($n=m$), it reduces to the standard definition. To analyze this, we define the specific structure of the matrix $K$. Let $K$ be a block-diagonal mixing matrix defined as:
\begin{equation}\label{integral_matrix}
K =
\begin{bmatrix}
	\mathbf{k}_1 & \mathbf{0} & \cdots & \mathbf{0} \\
	\mathbf{0} & \mathbf{k}_2 & \cdots & \mathbf{0} \\
	\vdots & \vdots & \ddots & \vdots \\
	\mathbf{0} & \mathbf{0} & \cdots & \mathbf{k}_m
\end{bmatrix}^T \in \mathbb{R}^{n \times m},
\end{equation}
where each block $\mathbf{k}_i = [k_{i,1}, \dots, k_{i, p_i}]$ is a row vector of length $p_i$, and $\sum_{i=1}^{m} p_i = n$. For simplicity, we assume $k_{i,j} \ge 0$.

Let $E = \operatorname{diag}(\varepsilon_{1,1}, \dots, \varepsilon_{m,p_m})$ with $\varepsilon_{i,j} \ge 0$. The product $\bar{K} = EK$ preserves the block structure, scaling elements by $\varepsilon_{i,j}$. Partitioning $A$ compatible with $K$:
\[
A = [\boldsymbol{a}_{1,1}, \dots, \boldsymbol{a}_{1,p_1}, \mid \dots \mid, \boldsymbol{a}_{m,1}, \dots, \boldsymbol{a}_{m,p_m}],
\]
the product $AEK$ becomes a linear combination of columns within each block:
\begin{equation}\label{eq_li_comb}
AEK = \left[ \sum_{j=1}^{p_1} \varepsilon_{1,j} k_{1,j} \boldsymbol{a}_{1,j}, \;\; \dots, \;\; \sum_{j=1}^{p_m} \varepsilon_{m,j} k_{m,j} \boldsymbol{a}_{m,j} \right] \in \mathbb{R}^{m \times m}.
\end{equation}

\subsection{Squared Matrices and Volterra-Lyapunov Stability}

We now introduce the concept of ``Squared Matrices'' to bridge the gap between the non-square $A$ and square stability theories.

\begin{definition}[Squared Matrices] \label{squaredmatrix}
Consider $A \in \mathbb{R}^{m \times n}$ partitioned as above. We construct a set of $m \times m$ matrices, denoted as the \textbf{Squared Matrices} of $A$.
A squared matrix $[A]^m_{s_l}$ is formed by selecting exactly one column from each of the $m$ partitions of $A$.
Formally, let $\kappa = (\kappa_1, \dots, \kappa_m)$ be a tuple where $\kappa_i \in \{1, \dots, p_i\}$. The corresponding squared matrix is:
\[
[A]^m_{s_\kappa} = [\boldsymbol{a}_{1,\kappa_1}, \boldsymbol{a}_{2,\kappa_2}, \dots, \boldsymbol{a}_{m,\kappa_m}].
\]
The total number of such matrices is $N = \prod_{i=1}^{m} p_i$.
\end{definition}
Squared matrices of dimension $k < m$ can be defined similarly if specific partitions are deactivated (corresponding to zero entries in $E$, as per Note \ref{note1}).

Recall the standard stability definitions \cite{Cross:-Thre}:
\begin{definition}
A matrix $M \in \mathbb{R}^{n \times n}$ is:
\begin{enumerate}
	\item \textbf{D-stable} if $MD$ is stable (eigenvalues in open LHP) for all diagonal $D > 0$.
	\item \textbf{Volterra-Lyapunov stable} if there exists a diagonal $D > 0$ such that $MD + DM^T > 0$ (positive definite).
\end{enumerate}
\end{definition}

\begin{lemma}[\cite{Cross:-Thre}] \label{lemma2}
If $M$ is Volterra-Lyapunov stable, then $M$ is D-stable. Furthermore, any principal minor of a Volterra-Lyapunov stable matrix is also D-stable.
\end{lemma}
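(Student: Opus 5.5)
The plan is the standard Lyapunov argument, carried out under the convention implicit in the statement. Since Volterra--Lyapunov stability is written as $MD+DM^T>0$, the matching notion of D-stability is that $MD$ is \emph{positive} stable, i.e.\ all eigenvalues of $MD$ lie in the open right half-plane. This is the convention used in Definition \ref{Dfn1}, where $\operatorname{Re}\{\sigma_i\}>0$. Equivalently, $-MD$ is stable in the left-half-plane sense, so the two phrasings differ only by a sign, and I will argue for $\operatorname{Re}\,\sigma(MD)>0$.

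\emph{Step 1 (D-stability).} Fix a diagonal $P>0$ with $MP+PM^T>0$, and let $D>0$ be an arbitrary diagonal matrix. Set $X=D^{-1}P$, which is again diagonal and positive definite. Because diagonal matrices commute, we have $(MD)X = MP$ and $X(MD)^T = D^{-1}PDM^T = PM^T$. Hence
\[
(MD)X + X(MD)^T = MP+PM^T > 0 .
\]
Now let $\lambda$ be an eigenvalue of $(MD)^T$ with eigenvector $v\neq 0$, so that $v^*(MD)=\lambda v^*$. Sandwiching the displayed identity between $v^*$ and $v$ gives
\[
v^*\bigl((MD)X+X(MD)^T\bigr)v = 2\operatorname{Re}(\lambda)\, v^*Xv > 0 .
\]
Since $v^*Xv>0$, this forces $\operatorname{Re}\lambda>0$. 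The matrices $MD$ and $(MD)^T$ have the same spectrum, so every eigenvalue of $MD$ has positive real part. As $D$ was arbitrary, $M$ is D-stable.

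\emph{Step 2 (principal submatrices).} Fix an index set $\alpha$, and write $M[\alpha]$ and $P[\alpha]$ for the principal submatrices indexed by $\alpha$. Because $P$ is diagonal, the block restriction commutes with the products:
\[
M[\alpha]P[\alpha] + P[\alpha]M[\alpha]^T = (MP+PM^T)[\alpha].
\]
The right-hand side is a principal submatrix of a positive definite matrix and is therefore positive definite. Moreover $P[\alpha]>0$ is diagonal. So $M[\alpha]$ is itself Volterra--Lyapunov stable, and Step 1 applied to $M[\alpha]$ shows it is D-stable. I read ``principal minor'' in the statement as ``principal submatrix,'' which is the sense needed later for $[\cdot]_j$ in Definition \ref{Dfn1}.

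The only delicate point is the algebraic choice $X=D^{-1}P$. This choice transfers the single Volterra--Lyapunov certificate to every scaling $D$, and it relies essentially on $P$ and $D$ both being diagonal so that they commute. Beyond that, the remaining care is the sign convention just discussed; the rest is routine.
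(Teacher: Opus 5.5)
Your proof is correct, and it is the standard argument behind the result that the paper does not prove but simply cites from Cross: the single diagonal certificate $P$ is transferred to every scaling $D$ via $X=D^{-1}P$, and principal submatrices inherit Volterra--Lyapunov stability because $P$ is diagonal. Your reading of D-stability as positive stability is forced rather than a matter of convention, since with the paper's literal ``open LHP'' definition the lemma already fails for $M=I$; the only slip is that $(MD)^T v=\lambda v$ gives $v^*(MD)=\bar\lambda v^*$ rather than $\lambda v^*$, which is harmless because only $\operatorname{Re}\lambda$ enters.
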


We extend this to the non-square case:

\begin{definition} \label{def_NSQ_lyp_ind}
A non-square matrix $A \in \mathbb{R}^{m \times n}$ is \textbf{Individually Volterra-Lyapunov Stable} if for every squared matrix $[A]^m_{s_l}$ (indexed by $l \in \{1, \dots, N\}$), there exists an individual positive diagonal matrix $D_l > 0$ such that:
\[
[A]^m_{s_l} D_l + D_l ([A]^m_{s_l})^T > 0.
\]
\end{definition}

\section{Main Results}

The core theoretical result of this paper establishes that individual Volterra-Lyapunov stability implies the extended D-stability for non-square matrices. This relies on a combinatorial result regarding the existence of weighting parameters.

\begin{lemma}[Combinatorial Existence Lemma] \label{lem:combinatorial}
Consider $m$ disjoint groups of indices, where group $\phi$ has size $p_\phi$. Let $\kappa = (\kappa_1, \dots, \kappa_m)$ represent a combination selecting one index from each group. Let there be a set of positive scalars $\lambda^\phi_\kappa > 0$ associated with each combination and each group.
Let $P_{\phi, j}$ be the total weighted payoff for the $j$-th element of group $\phi$, defined as:
\[
P_{\phi,j} := \sum_{\kappa:\,\kappa_\phi = j} \gamma_{\kappa}\,\lambda^\phi_{\kappa},
\]
where $\gamma_\kappa > 0$ is a global weight for combination $\kappa$.
Given any set of desired ratios $k_j^\phi > 0$ for $j \in \{1, \dots, p_\phi-1\}$ such that $P_{\phi, j+1} / P_{\phi, 1} = k_j^\phi$, there exists a set of positive global weights $\{\gamma_\kappa\}_{\kappa}$ such that all prescribed ratios are satisfied simultaneously.
\end{lemma}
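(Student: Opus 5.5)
We need to find $\gamma_\kappa>0$ with $P_{\phi,j}=k^\phi_{j-1}P_{\phi,1}$ for all $\phi$ and $j\ge 2$ (setting $k^\phi_0:=1$). Our plan is to reduce this to a positive solution of a homogeneous linear system. We then build such a solution with a product ansatz, corrected by a fixed-point or scaling argument.

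\textbf{Step 1 (reformulation).} Fix target shares $c^\phi_j := k^\phi_{j-1}/\sum_{i} k^\phi_{i-1}>0$. These satisfy $\sum_j c^\phi_j=1$. The requirement then becomes
\[
\sum_{\kappa:\kappa_\phi=j}\gamma_\kappa\lambda^\phi_\kappa = c^\phi_j\,S_\phi,\qquad S_\phi:=\sum_{\kappa}\gamma_\kappa\lambda^\phi_\kappa ,
\]
for every group $\phi$ and element $j$. This is linear and homogeneous in $\gamma$, so only the existence of a strictly positive solution matters.

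\textbf{Step 2 (product ansatz in the decoupled case).} Suppose first that $\lambda^\phi_\kappa$ depends only on $\phi$ and $\kappa_\phi$, say $\lambda^\phi_\kappa=\mu^\phi_{\kappa_\phi}$. Take $\gamma_\kappa=\prod_{\psi}x^\psi_{\kappa_\psi}$ with $x^\psi>0$. Then
\[
P_{\phi,j}=\mu^\phi_j x^\phi_j\prod_{\psi\ne\phi}\Big(\sum_i x^\psi_i\Big),
\]
and the choice $x^\phi_j=c^\phi_j/\mu^\phi_j$ gives the ratios exactly. This shows the group constraints can be met independently, because each group's shares are governed by its own factor.

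\textbf{Step 3 (general $\lambda$: fixed point).} For general positive $\lambda$, keep the ansatz $\gamma_\kappa=\prod_\psi x^\psi_{\kappa_\psi}$ with each $x^\phi$ in the open simplex $\Delta_\phi$. Define a map on $\prod_\phi \Delta_\phi$ by
\[
T(x)^\phi_j \propto c^\phi_j\, x^\phi_j / \big(P_{\phi,j}(x)\big/ x^\phi_j\big).
\]
Here $P_{\phi,j}(x)/x^\phi_j=\sum_{\kappa_\phi=j}\lambda^\phi_\kappa\prod_{\psi\neq\phi}x^\psi_{\kappa_\psi}$ is a positive continuous function. A fixed point of $T$ gives $P_{\phi,j}\propto c^\phi_j$, which is exactly what is needed.

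Brouwer's theorem does not apply directly on the open simplex. We would handle this in one of two ways:
\begin{itemize}
\item Bound the ratios. We have $P_{\phi,j}(x)/x^\phi_j\in[\lambda_{\min},\lambda_{\max}]\cdot\prod_{\psi\ne\phi}\sum_i x^\psi_i=[\lambda_{\min},\lambda_{\max}]$. Hence $T$ maps into the compact subset $\{x^\phi_j\ge \delta\}$, with $\delta$ depending only on $c$ and $\lambda_{\min}/\lambda_{\max}$. Brouwer then applies to the simplified map $T(x)^\phi_j\propto c^\phi_j/\big(P_{\phi,j}(x)/x^\phi_j\big)$, whose fixed points also satisfy the equations.
\item Alternatively, observe that the equations are the stationarity conditions of a strictly concave potential and minimize it (a Sinkhorn/matrix-scaling style argument).
\end{itemize}

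\textbf{Main obstacle.} The key difficulty is the coupling in Step 3: the weight $\lambda^\phi_\kappa$ depends on the whole tuple $\kappa$, so adjusting one group's factor disturbs the other groups' shares. The uniform lower bound $\delta$, which comes from the boundedness of $\lambda$ on the finite index set, is what makes the fixed-point argument go through. We will check that bound carefully, and fall back to the Sinkhorn-type potential argument if it fails.
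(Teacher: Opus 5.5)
Your proof is correct, but it takes a genuinely different route from the paper.

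\textbf{The paper's route.} The paper uses an explicit triangular elimination.
\begin{itemize}
\item It first enforces the group-1 ratios separately for every tail $\alpha=(\kappa_2,\dots,\kappa_m)$, by setting $\gamma_{(j+1,\alpha)}=k^1_j\,\frac{\lambda^1_{(1,\alpha)}}{\lambda^1_{(j+1,\alpha)}}\,\gamma_{(1,\alpha)}$.
\item It then treats the surviving parameters $\gamma_{(1,\alpha)}$ as the unknowns for group 2. Again it enforces the ratios tail by tail, using accumulated positive coefficients $A_{j,\eta'}$.
\item It continues in this way until only $\gamma_{(1,\dots,1)}$ is free.
\end{itemize}
Each later step only rescales whole tail blocks, so earlier ratios are never disturbed. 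The result is closed-form positive weights, rational in the data, obtained by back substitution with no topology. Each group's ratios even hold separately for every tail, which is stronger than required. The price is that the construction depends on an ordering of the groups, and the weights have no particular structure.

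\textbf{Your route.} You restrict to product weights $\gamma_\kappa=\prod_\psi x^\psi_{\kappa_\psi}$. This leaves exactly $\sum_\phi(p_\phi-1)$ unknowns for the same number of constraints, and you solve the coupled square system by Brouwer. Your key observation is right: $Q_{\phi,j}(x):=P_{\phi,j}(x)/x^\phi_j$ is a convex combination of the $\lambda^\phi_\kappa$, so it lies in $[\lambda_{\min},\lambda_{\max}]$ on all of $\prod_\phi\overline{\Delta}_\phi$. Consequences:
\begin{itemize}
\item $T(x)^\phi_j\propto c^\phi_j/Q_{\phi,j}(x)$ is a continuous self-map of the closed product of simplices. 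You can apply Brouwer there directly; the open-simplex worry is unnecessary.
\item Every fixed point satisfies $x^\phi_j\ge c^\phi_j\lambda_{\min}/\lambda_{\max}>0$.
\item At a fixed point, $P_{\phi,j}=x^\phi_jQ_{\phi,j}=\mu_\phi c^\phi_j$, which is what you need.
\end{itemize}
Your argument is non-constructive. In exchange it is symmetric in the groups and gives a rank-one (product-form) solution with an explicit a priori lower bound on the weights.

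\textbf{Two corrections.} Neither affects the argument you finally rely on.
\begin{itemize}
\item The first map you display, $c^\phi_jx^\phi_j/(P_{\phi,j}/x^\phi_j)=c^\phi_jx^\phi_j/Q_{\phi,j}$, has fixed points with $Q_{\phi,j}\propto c^\phi_j$, not $P_{\phi,j}\propto c^\phi_j$. What you want is the multiplicative update $c^\phi_jx^\phi_j/P_{\phi,j}(x)$. That is exactly your simplified map $c^\phi_j/Q_{\phi,j}(x)$.
\item The Sinkhorn-type fallback is unjustified here. The usual scaling potential $\sum_\kappa\lambda_\kappa\gamma_\kappa-\sum_{\phi,j}c^\phi_ju^\phi_j$ (with $u=\log x$) requires one $\lambda_\kappa$ shared by all groups. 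With $\phi$-dependent weights, $\partial P_{\phi,j}/\partial u^\psi_i=\sum_{\kappa_\phi=j,\,\kappa_\psi=i}\lambda^\phi_\kappa\gamma_\kappa$, while $\partial P_{\psi,i}/\partial u^\phi_j$ carries $\lambda^\psi_\kappa$ instead. So the Jacobian is not symmetric, and no such potential is evident. Your Brouwer argument does not need it.
\end{itemize}
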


\begin{proof}
The proof is constructive and provided in Appendix~A. It utilizes an inductive argument on the groups $\phi=1, \dots, m$ to solve for $\gamma_\kappa$.
\end{proof}

\subsection{Illustrative Example: $3 \times 2 \times 3$ Configuration} \label{sec:example}

To clarify the constructive procedure in the proof above, we present a detailed independent example. Consider $m=3$ groups with sizes $p_1=3$, $p_2=2$, and $p_3=3$. The total number of combinations is $N = 3 \times 2 \times 3 = 18$.

We denote the indices as $\kappa = (i, j, \ell)$ where $i \in \{1,2,3\}$, $j \in \{1,2\}$, and $\ell \in \{1,2,3\}$.
The objective is to find 18 strictly positive weights $\gamma_{(i,j,\ell)}$ to satisfy the following target ratios:
\begin{itemize}
\item \textit{Group 1 ($p_1=3$):} Ratios $k_1^1$ (between $i=2$ and $1$) and $k_2^1$ (between $i=3$ and $1$).
\item \textit{Group 2 ($p_2=2$):} Ratio $k_1^2$ (between $j=2$ and $1$).
\item \textit{Group 3 ($p_3=3$):} Ratios $k_1^3$ (between $\ell=2$ and $1$) and $k_2^3$ (between $\ell=3$ and $1$).
\end{itemize}

\noindent \textit{Step 1: Processing Group 1 (fixing tail $\alpha = (j, \ell)$)}

In the proof, the ``tail'' index is $\alpha$. Here, $\alpha = (j, \ell)$. We satisfy the ratios term-wise for each of the $2 \times 3 = 6$ tails.
For every pair $(j, \ell)$, we define $\gamma_{(2,j,\ell)}$ and $\gamma_{(3,j,\ell)}$ in terms of $\gamma_{(1,j,\ell)}$:
\begin{equation}
\gamma_{(2,j,\ell)} := k_1^1 \frac{\lambda^1_{(1,j,\ell)}}{\lambda^1_{(2,j,\ell)}} \gamma_{(1,j,\ell)}, \qquad
\gamma_{(3,j,\ell)} := k_2^1 \frac{\lambda^1_{(1,j,\ell)}}{\lambda^1_{(3,j,\ell)}} \gamma_{(1,j,\ell)}.
\end{equation}
This ensures that the payoff ratios for Group 1 are satisfied regardless of the value of $\gamma_{(1,j,\ell)}$.
Consistent with the inductive proof, the remaining free parameters are those where the first index is fixed to 1. We define these as the level-1 base parameters $\beta^{(1)}_{\alpha}$:
\begin{equation}
\beta^{(1)}_{(j,\ell)} := \gamma_{(1,j,\ell)}.
\end{equation}

\noindent \textit{Step 2: Processing Group 2 (fixing tail $\eta' = (\ell)$)}

We now satisfy the ratio $k_1^2$ for Group 2. The remaining tail index is $\eta' = (\ell)$.
The total payoff $P_{2,j}$ is a sum over indices $i$ and $\ell$. We group the terms by the tail $\ell$.
Using the substitutions from Step 1, the contribution of any specific tail $\ell$ to the payoff $P_{2,j}$ is a linear function of the base parameter $\beta^{(1)}_{(j,\ell)}$. Let $A_{2, j, \ell}$ be the accumulated coefficient for index $j$ and tail $\ell$ (corresponding to $A_{j, \eta'}$ in the proof):
\[
A_{2, j, \ell} := \lambda^2_{(1,j,\ell)} + k_1^1 \frac{\lambda^1_{(1,j,\ell)}}{\lambda^1_{(2,j,\ell)}} \lambda^2_{(2,j,\ell)} + k_2^1 \frac{\lambda^1_{(1,j,\ell)}}{\lambda^1_{(3,j,\ell)}} \lambda^2_{(3,j,\ell)}.
\]
Then the payoff is $P_{2,j} = \sum_{\ell} A_{2, j, \ell} \beta^{(1)}_{(j,\ell)}$. We define $\beta^{(1)}_{(2,\ell)}$ in terms of $\beta^{(1)}_{(1,\ell)}$ to satisfy the ratio $k_1^2$ term-wise:
\begin{equation}
\beta^{(1)}_{(2,\ell)} := k_1^2 \frac{A_{2, 1, \ell}}{A_{2, 2, \ell}} \beta^{(1)}_{(1,\ell)}.
\end{equation}
The remaining free parameters now correspond to fixing the first \emph{two} indices to 1 (i.e., $i=1, j=1$). We define the level-2 base parameters:
\begin{equation}
\beta^{(2)}_{(\ell)} := \beta^{(1)}_{(1,\ell)} = \gamma_{(1,1,\ell)}.
\end{equation}

\noindent \textit{Step 3: Processing Group 3 (no tail remaining)}

Finally, we satisfy the ratios $k_1^3$ and $k_2^3$ for Group 3.
The payoffs $P_{3,\ell}$ are sums over the previous indices $i, j$. Using the substitutions from Step 2, each payoff is a multiple of $\beta^{(2)}_{(\ell)}$.
Let $A_{3, \ell}$ be the total accumulated coefficient for index $\ell$ (computed similarly to Step 2 using the known $\lambda$'s, $k$'s, and $A$'s). We define:
\begin{equation}
\beta^{(2)}_{(2)} := k_1^3 \frac{A_{3, 1}}{A_{3, 2}} \beta^{(2)}_{(1)}, \qquad
\beta^{(2)}_{(3)} := k_2^3 \frac{A_{3, 1}}{A_{3, 3}} \beta^{(2)}_{(1)}.
\end{equation}
Noting that $\beta^{(2)}_{(\ell)} = \gamma_{(1,1,\ell)}$, this step expresses $\gamma_{(1,1,2)}$ and $\gamma_{(1,1,3)}$ in terms of $\gamma_{(1,1,1)}$.

\noindent \textit{Result}

By choosing any strictly positive value for $\gamma_{(1,1,1)}$ (e.g., $\gamma_{(1,1,1)}=1$), all other weights $\gamma_{(i,j,\ell)}$ are uniquely determined via backward substitution through Steps 3, 2, and 1. This construction guarantees that all 18 weights are positive and all target ratios are satisfied simultaneously.

\begin{theorem} \label{thm:main}
If a real non-square matrix $A \in \mathbb{R}^{m \times n}$ is Individually Volterra-Lyapunov stable (Definition \ref{def_NSQ_lyp_ind}), then $A$ is D-stable (Definition \ref{Dfn1}). That is, there exists a matrix $K$ such that for all non-negative diagonal $E$, $\operatorname{Re}\{ \sigma_i ( [A E K]_j ) \} > 0$.
\end{theorem}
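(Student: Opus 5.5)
The plan is to show that, for \emph{any} block-diagonal $K$ of the form \eqref{integral_matrix} with strictly positive entries $k_{i,j}>0$, every relevant $[AEK]_j$ is itself Volterra-Lyapunov stable. Positive stability then follows from Lemma~\ref{lemma2}, using the sign convention of Definition~\ref{def_NSQ_lyp_ind}.

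\textbf{Reduction to the full, strictly positive case.} First I handle the degenerate choices of $E$ and the principal sub-blocks. By Note~\ref{note1}, zero entries of $E$ delete the corresponding columns of $A$. If a whole block vanishes, the corresponding row/column index of $AEK$ is removed; the same happens when we pass to a principal submatrix $[AEK]_j$. In every case we are left with a product of the same type, $A'E'K'$, where $A'$ is formed from a subset of rows and a subset of surviving columns of the surviving blocks, and $E'$ is strictly positive. Each squared matrix of $A'$ is a principal submatrix of some squared matrix $[A]^m_{s_l}$. A principal submatrix of a matrix satisfying $MD+DM^T>0$ satisfies the same inequality with the corresponding principal submatrix of $D$. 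Hence $A'$ is again individually Volterra-Lyapunov stable, and it suffices to treat the full case with all $\varepsilon_{i,j}>0$.

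\textbf{Main construction.} For each squared matrix $[A]^m_{s_\kappa}$, let $D_\kappa=\operatorname{diag}(d^1_\kappa,\dots,d^m_\kappa)>0$ be its individual Volterra-Lyapunov matrix. For weights $\gamma_\kappa>0$ to be chosen, consider
\[
S:=\sum_\kappa \gamma_\kappa [A]^m_{s_\kappa} D_\kappa .
\]
Its symmetric part $S+S^T=\sum_\kappa\gamma_\kappa\bigl([A]^m_{s_\kappa}D_\kappa+D_\kappa([A]^m_{s_\kappa})^T\bigr)$ is positive definite. Column $\phi$ of $S$ equals $\sum_{j}\boldsymbol a_{\phi,j}\,P_{\phi,j}$ with $P_{\phi,j}=\sum_{\kappa:\kappa_\phi=j}\gamma_\kappa d^\phi_\kappa$. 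This is exactly the payoff of Lemma~\ref{lem:combinatorial} with $\lambda^\phi_\kappa=d^\phi_\kappa$. Comparing with \eqref{eq_li_comb}, I want $P_{\phi,j}\propto \varepsilon_{\phi,j}k_{\phi,j}$ within each block. So I prescribe the ratios $k^\phi_{j}=\varepsilon_{\phi,j+1}k_{\phi,j+1}/(\varepsilon_{\phi,1}k_{\phi,1})>0$, and Lemma~\ref{lem:combinatorial} yields positive $\gamma_\kappa$ achieving all of them simultaneously. Then $S=AEK\,\Delta$ with $\Delta=\operatorname{diag}\bigl(P_{\phi,1}/(\varepsilon_{\phi,1}k_{\phi,1})\bigr)>0$.

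\textbf{Conclusion.} Setting $M=AEK$, we get $M\Delta+\Delta M^T=S+S^T>0$. So $AEK$ is Volterra-Lyapunov stable with diagonal $\Delta$, hence D-stable by Lemma~\ref{lemma2}; taking the identity scaling, all its eigenvalues have positive real part. Applying this to every reduced product from the first step gives the claim for all $E\ge0$ and all $j\in\mathcal M$.

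\textbf{Expected obstacle.} The delicate step is the bookkeeping of the reduction. I must check that the principal-submatrix and column-deletion operations commute with forming squared matrices, so that the reduced matrix inherits individual Volterra-Lyapunov stability. I must also check that the positivity hypotheses of Lemma~\ref{lem:combinatorial} (all $\lambda,k>0$) remain valid after reduction, which is why $K$ is chosen strictly positive.
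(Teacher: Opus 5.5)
Your proposal is correct and takes essentially the same route as the paper. Both arguments sum the individual Volterra--Lyapunov inequalities with weights $\gamma_\kappa$, then apply Lemma~\ref{lem:combinatorial} with $\lambda^\phi_\kappa$ the diagonal entries of $D_\kappa$ to obtain $\sum_\kappa\gamma_\kappa[A]^m_{s_\kappa}D_\kappa = AEK\Delta$, and conclude positive stability. Your version is slightly tighter in two places: you close the argument via $AEK\,\Delta+\Delta\,(AEK)^T>0$, where the paper only remarks that column scaling does not affect stability, and you explicitly carry out the reduction for zero entries of $E$ and for principal sub-blocks.
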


\begin{proof}
Assume $E$ is strictly positive (cases with zero elements follow by dimensionality reduction as per Note \ref{note1}).
Since $A$ is individually Volterra-Lyapunov stable, for every squared matrix index $l \in \{1, \dots, N\}$, there exists $D_l > 0$ such that:
\begin{equation}\label{eq_mdl}
	[A]^m_{s_l} D_l + D_l ([A]^m_{s_l})^T > 0.
\end{equation}
Since the sum of positive definite matrices is positive definite, for any arbitrary coefficients $\gamma_l > 0$:
\begin{equation} \label{al_1}
	\sum_{l=1}^{N} \gamma_l \left( [A]^m_{s_l} D_l + D_l ([A]^m_{s_l})^T \right) > 0.
\end{equation}
Let $D_{sum} = \sum_{l=1}^{N} \gamma_l [A]^m_{s_l} D_l$. By standard properties of Lyapunov operators, the matrix $D_{sum}$ has eigenvalues with positive real parts.
We need to show that the matrix $AEK$ can be represented in the form of $D_{sum}$ (specifically, the column-weighted sum) by appropriate selection of $\gamma_l$.

From \eqref{eq_li_comb}, the $j$-th column of $AEK$ is $\sum_{r=1}^{p_j} \varepsilon_{j,r} k_{j,r} \boldsymbol{a}_{j,r}$.
The term $\sum_{l=1}^{N} \gamma_l [A]^m_{s_l} D_l$ can be expanded. Let $D_l = \operatorname{diag}(\lambda^1_l, \dots, \lambda^m_l)$. The $j$-th column of this sum is:
\[
\sum_{r=1}^{p_j} \left( \sum_{l: \kappa_j(l)=r} \gamma_l \lambda^j_l \right) \boldsymbol{a}_{j,r}.
\]
By setting the scalars in $AEK$ to match these sums, i.e., determining $\varepsilon_{j,r} k_{j,r}$ via the ratios of the inner sums, we invoke Lemma \ref{lem:combinatorial}. The lemma guarantees that for any desired ratios of column coefficients (determined by $E$ and $K$), there exist weights $\gamma_l$ such that the aggregate matrix matches $AEK$ up to a column scaling diagonal matrix (which does not affect sign stability). Thus, $AEK$ inherits the stability properties, proving the theorem.
\end{proof}

\section{Discussion and Open Problems}

The preceding analysis establishes a fundamental link between the dynamic closed-loop stability of the non-square system and the algebraic properties of its steady-state gain matrix, $H(0)$. Specifically, by strategically re-grouping and pairing the system's inputs and outputs, we can substantially change the effective structure of both $H(0)$ and the block-diagonal mixing matrix $K$. This configuration flexibility in the decentralized control architecture is highly advantageous, as it allows us to intentionally manipulate the system representation to satisfy our proposed sufficient conditions for DIC-NSQ, even if the default pairing does not.

To abstract this system-level Decentralized Integral Controllability (DIC) into a purely matrix-theoretic framework, we assume the generic non-square real matrix $A$ represents the negative steady-state gain matrix, such that $A = -H(0)$. By making this substitution, the problem of verifying DIC for the dynamical system simplifies directly to evaluating the extended D-stability of the constant matrix $A$.

In this study, we have provided mild sufficient conditions to guarantee the Decentralized Integral Controllability of non-square systems. We established that if the non-square matrix $A$ is an individually Volterra-Lyapunov stable matrix, or if all of its squared matrices are column strictly diagonally dominant, then the underlying non-square system achieves DIC-NSQ. 

However, establishing the exact boundaries of these stability requirements for both real matrices and dynamic systems remains an open mathematical problem. While our proposed sufficient conditions provide an efficient and practical method for controller design, the exact necessary and sufficient conditions for the extended D-stability of non-square matrices and systems are still lacking. Future research efforts should aim to fully characterise these necessary and sufficient conditions, which would further minimize conservatism in the design and analyse the absolute fundamental limits of non-square decentralized control systems.

\textbf{Open Problem: Necessary and Sufficient Conditions for Non-Square D-Stability}

Let $A \in \mathbb{R}^{m \times n}$ (with $n > m$) represent the negative steady-state gain matrix of a system, such that $A = -H(0)$. By strategically re-grouping and pairing the inputs and outputs, we establish the specific block-diagonal structure of the mixing matrix $K \in \mathbb{R}^{n \times m}$. 

The open problem is to determine the exact algebraic necessary and sufficient conditions on $A$ such that there exists at least one valid configuration of $K$ (with $k_{i,j} \ge 0$) ensuring the extended D-stability of $A$. Formally, this requires identifying the precise, non-conservative properties $A$ must possess so that for all non-negative diagonal matrices $E \in \mathbb{R}^{n \times n}$ and all index subsets $j \in \mathcal{M}$:
\begin{equation}
	\operatorname{Re}\{\sigma_i ( [A E K]_j ) \} > 0
\end{equation}

While sufficient conditions, such as the individual Volterra-Lyapunov stability of the squared matrices of $A$, have been established to guarantee Decentralized Integral Controllability (DIC-NSQ), the fundamental algebraic limits characterising the complete class of such stable non-square matrices remain unresolved.

%

\section{Conclusion}
Our primary contribution is the derivation of mild sufficient conditions that guarantee extended D-stability for non-square matrices, providing a rigorous mathematical foundation for identifying feasible input-output pairings. Specifically, we proved that if the set of all square matrices derived from the non-square system, which directly represent candidate control pairings, are individually Volterra-Lyapunov stable, or if they are column strictly diagonally dominant, the system is D-stable in the extended sense. This provides a robust and practical sufficient condition for the stability of non-square decentralized control systems, ensuring unconditional stability and fault tolerance without the need to artificially square the system by removing critical redundant actuators.

By directly addressing the structural challenges of non-square mappings, this framework extends beyond traditional industrial processes, offering valuable theoretical insights for modern data-driven architectures such as Multi-Agent Reinforcement Learning (MARL). Despite these advancements, establishing the exact boundaries for non-square D-stability remains an open mathematical problem. Future work will focus on identifying the exact necessary and sufficient conditions for the D-stability of non-square matrices. Additionally, further research will explore optimal strategies for structuring the block-diagonal mixing matrix $K$ through strategic input-output pairing, aiming to minimize conservatism and define the absolute fundamental limits of non-square decentralized control architectures across both classical and AI-driven environments.

%

\appendix

\section{Proof of Lemma \ref{lem:combinatorial}}

\begin{proof}
The proof is constructive. We demonstrate the existence of strictly positive weights $\gamma_\kappa$ by satisfying the ratio constraints group by group, sequentially from $\phi=1$ to $\phi=m$.

\subsubsection*{Setup and Definitions}
Fix $m \in \mathbb{N}$ and group sizes $p_\phi$. Let $\kappa = (\kappa_1, \dots, \kappa_m)$ be the multi-index for the combinations, where $\kappa_\phi \in \{1, \dots, p_\phi\}$.
We are given strictly positive proportionality factors $\lambda^\phi_\kappa > 0$ and target ratios $k_j^\phi > 0$.
Our goal is to find $\gamma_\kappa > 0$ such that for every group $\phi$ and every index $j \in \{1, \dots, p_\phi-1\}$:
\begin{equation} \label{eq:target_ratio}
	\frac{P_{\phi,j+1}}{P_{\phi,1}} = \frac{\sum_{\kappa:\,\kappa_\phi=j+1} \gamma_\kappa \lambda^\phi_\kappa}{\sum_{\kappa:\,\kappa_\phi=1} \gamma_\kappa \lambda^\phi_\kappa} = k_j^\phi.
\end{equation}

\subsubsection*{Processing Group 1}
Consider the first group ($\phi=1$). The total index can be split into the first component $\kappa_1$ and the ``tail'' $\alpha = (\kappa_2, \dots, \kappa_m)$.
The condition for Group 1 requires that the ratio of weighted sums equals $k_j^1$. We satisfy this condition in the strongest possible way: term-wise for every tail configuration.

Fix an arbitrary tail $\alpha$. We define the weight $\gamma_{(j+1, \alpha)}$ (where the first index is $j+1$) in terms of the weight $\gamma_{(1, \alpha)}$ (where the first index is $1$) as follows:
\begin{equation} \label{eq:base_definition}
	\gamma_{(j+1, \alpha)} := k_j^1 \frac{\lambda^1_{(1, \alpha)}}{\lambda^1_{(j+1, \alpha)}} \gamma_{(1, \alpha)}.
\end{equation}
Since $\gamma_{(1, \alpha)}$ is undetermined, we treat it as a free parameter. Let us verify the summation ratio:
\[
\frac{P_{1,j+1}}{P_{1,1}} = \frac{\sum_{\alpha} \gamma_{(j+1, \alpha)} \lambda^1_{(j+1, \alpha)}}{\sum_{\alpha} \gamma_{(1, \alpha)} \lambda^1_{(1, \alpha)}}.
\]
Substituting the definition from \eqref{eq:base_definition} into the numerator:
\begin{align*}
	\sum_{\alpha} \left( k_j^1 \frac{\lambda^1_{(1, \alpha)}}{\lambda^1_{(j+1, \alpha)}} \gamma_{(1, \alpha)} \right) \lambda^1_{(j+1, \alpha)}
	&= \sum_{\alpha} k_j^1 \left( \gamma_{(1, \alpha)} \lambda^1_{(1, \alpha)} \right) \\
	&= k_j^1 \sum_{\alpha} \gamma_{(1, \alpha)} \lambda^1_{(1, \alpha)}.
\end{align*}

The numerator is exactly $k_j^1$ times the denominator. Thus, the ratio holds.

At the end of this step, the independent degrees of freedom are the weights where the first index is fixed to 1. We define these as our first set of base parameters:
\[
\beta^{(1)}_{\alpha} := \gamma_{(1, \alpha)}.
\]
Note that $\alpha$ represents the indices $(\kappa_2, \dots, \kappa_m)$.

\subsubsection*{Inductive Step}
\emph{Hypothesis:} Assume we have processed groups $1, \dots, \delta$. Every weight $\gamma_\kappa$ is currently expressed as a specific positive coefficient $C^{(\delta)}_\kappa$ multiplied by a base parameter $\beta^{(\delta)}_{\eta}$, where $\eta = (\kappa_{\delta+1}, \dots, \kappa_m)$ is the remaining tail index.
Crucially, consistent with the base case, this base parameter corresponds to the weight where the first $\delta$ indices are fixed to 1:
\begin{equation}
	\beta^{(\delta)}_{\eta} = \gamma_{(1, \dots, 1, \eta)}.
\end{equation}

\emph{Processing Group $\delta+1$:}
We now satisfy the ratios for group $\phi = \delta+1$. Let the index for this group be $j = \kappa_{\delta+1}$. Let the remaining indices be the new tail $\eta' = (\kappa_{\delta+2}, \dots, \kappa_m)$.
We can write the base parameter $\beta^{(\delta)}_{\eta}$ explicitly as $\beta^{(\delta)}_{(j, \eta')}$.

The payoff sum $P_{\delta+1, j}$ is a sum over all possible combinations. We group this sum by the new tail $\eta'$:
\[
P_{\delta+1, j} = \sum_{\eta'} \left( \text{Terms depending on } \beta^{(\delta)}_{(j, \eta')} \right).
\]
Specifically, the contribution of a specific tail $\eta'$ to the payoff $P_{\delta+1, j}$ is a linear function of $\beta^{(\delta)}_{(j, \eta')}$. Let $A_{j, \eta'}$ be the accumulated positive coefficient for this term (derived from the $\lambda$'s and previous $k$'s).

To satisfy the target ratio $k_j^{\delta+1}$, we enforce the condition term-wise for each tail $\eta'$. We define the parameter $\beta^{(\delta)}_{(j+1, \eta')}$ (where index $\delta+1$ is $j+1$) in terms of $\beta^{(\delta)}_{(1, \eta')}$ (where index $\delta+1$ is $1$):
\begin{equation}
	\beta^{(\delta)}_{(j+1, \eta')} := k_j^{\delta+1} \frac{A_{1, \eta'}}{A_{j+1, \eta'}} \beta^{(\delta)}_{(1, \eta')}.
\end{equation}
This ensures that for every sub-group of terms defined by $\eta'$, the ratio is exactly $k_j^{\delta+1}$. Consequently, the ratio of the total sums is also $k_j^{\delta+1}$.

\subsubsection*{Summation}
By this construction, the independent parameters for the next step are those where the index for group $\delta+1$ is fixed to 1. We define the new base parameters:
\[
\beta^{(\delta+1)}_{\eta'} := \beta^{(\delta)}_{(1, \eta')} = \gamma_{(1, \dots, 1, 1, \eta')}.
\]
Iterating this process until $\phi=m$, the only remaining free parameter is the one where all indices are fixed to 1:
\[
\beta^{(m)} = \gamma_{(1, 1, \dots, 1)}.
\]
Since all proportionality constants generated in the steps are strictly positive (products/quotients of positive $\lambda$'s, $k$'s, and coefficients), choosing $\gamma_{(1, \dots, 1)} = 1$ (or any positive real number) uniquely determines all $\gamma_\kappa > 0$, satisfying all ratio constraints simultaneously.

The fully expanded forms of the aforementioned ratios, which include the original extended numerators and denominators, as well as the associated proof of this lemma, can be found in \bf{\cite{tong2024sufficient} for the sake of completeness}.
\end{proof}


\bibliography{VO2.bib}

\end{document}